  \renewcommand{\Pr}{\mbox{\rm Pr}}
  \newcommand{\Exp}{{\mathbb{E}}}
  \newcommand{\C}{\mathbb{C}} 
  \newcommand{\Z}{\mathbb{Z}} 
  \newcommand{\F}{\mathbb{F}} 
  \DeclareMathOperator{\im}{im} 
  \DeclareMathOperator{\vspan}{Span} 
  \newcommand{\st}{:\,} 
  \newcommand{\eps}{\varepsilon}
  \newcommand{\ceil}[1]{\lceil{#1}\rceil}
  \newcommand{\floor}[1]{\lfloor{#1}\rfloor}
  \DeclareMathOperator{\symten}{Sym}
  \DeclareMathOperator{\arank}{arank}
  \DeclareMathOperator{\prank}{prank} 
  \DeclareMathOperator{\bias}{bias} 
  \newcommand{\beq}{\begin{equation}}
  \newcommand{\eeq}{\end{equation}}
  \newcommand{\beqn}{\begin{equation*}}
  \newcommand{\eeqn}{\end{equation*}}
  \newcommand{\beqr}{\begin{eqnarray}}
  \newcommand{\eeqr}{\end{eqnarray}}
  \newcommand{\beqrn}{\begin{eqnarray*}}
  \newcommand{\eeqrn}{\end{eqnarray*}}
  \newcommand{\bmline}{\begin{multline}}
  \newcommand{\emline}{\end{multline}}
  \newcommand{\bmlinen}{\begin{multline*}}
  \newcommand{\emlinen}{\end{multline*}}
  \theoremstyle{plain}
  \newtheorem{theorem}{Theorem}[section]
  \newtheorem{lemma}[theorem]{Lemma}
  \newtheorem{proposition}[theorem]{Proposition}
  \newtheorem{corollary}[theorem]{Corollary}
  \theoremstyle{definition}
  \newtheorem{definition}[theorem]{Definition}
  \theoremstyle{remark}
  \renewenvironment{proof}[1][]{
    	\begin{trivlist}
     	\item[\hspace{\labelsep}{\em\noindent Proof#1:\/}]}
     	{{\hfill$\Box$}
    	\end{trivlist}
  }
  \newtheorem*{rep@theorem}{\rep@title}
  \newcommand{\newreptheorem}[2]{%
  \newenvironment{rep#1}[1]{%
  \def\rep@title{#2 \ref{##1}}%
  \begin{rep@theorem}}%
  {\end{rep@theorem}}}
\begin{document}
\title{Subspaces of tensors with high analytic rank}
\author{Jop Bri\"{e}t}
\address{CWI, Science Park 123, 1098 XG Amsterdam, The Netherlands}
\email{j.briet@cwi.nl}

\thanks{The author  is  supported  by   the  Gravitation grant  NETWORKS-024.002.003  from  the Dutch Research Council (NWO)}
\maketitle

\begin{abstract}
It is shown that for any subspace $V\subseteq \F_p^{n\times\cdots\times n}$ of $d$-tensors,
if $\dim(V) \geq tn^{d-1}$, then
there is  subspace $W\subseteq V$ of dimension at least
$t/(dr) - 1$
whose nonzero elements all have analytic rank~$\Omega_{d,p}(r)$.
As an application, we generalize a result of Altman on Szemer\'edi's theorem with random differences.
\end{abstract}

\section{Introduction}

In~\cite{Meshulam:1985}, Meshulam proved the following result.

\begin{theorem}[Meshulam]\label{thm:meshulam}
Let~$\F$ be a field and let~$V\subseteq M_n(\F)$ be a subspace of~$n\times n$ matrices.
If $\dim(V) > rn$, then~$V$ contains a matrix of rank at least~$r+1$.
\end{theorem}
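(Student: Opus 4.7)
The plan is to argue by contradiction: assume $\dim V > rn$ yet every $M \in V$ has $\rank(M) \le r$, and deduce a contradiction. Passing to an infinite extension if necessary (a standard reduction for small $\F$), I work over an infinite field throughout.

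Pick $M_0 \in V$ of maximum rank $s \le r$. Left- and right-multiplication by invertible matrices alters neither the dimension of $V$ nor the set of ranks it realizes, so I arrange that $M_0 = \begin{pmatrix} I_s & 0 \\ 0 & 0 \end{pmatrix}$. Writing a generic $N \in V$ in matching block form $\begin{pmatrix} A & B \\ C & D \end{pmatrix}$, the matrix $M_0 + \lambda N$ has rank at most $s$ for every $\lambda \in \F$, while for cofinitely many $\lambda$ the block $I_s + \lambda A$ is invertible. The Schur complement identity then reads
\[
\rank(M_0 + \lambda N) \;=\; s \;+\; \rank\bigl(\lambda D - \lambda^2 C(I_s + \lambda A)^{-1} B\bigr),
\]
so the Schur complement vanishes as a rational function in $\lambda$. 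Expanding at $\lambda = 0$ yields the cascade $D = 0$, $CB = 0$, $CAB = 0$, etc. Substituting $\lambda_1 N_1 + \lambda_2 N_2$ for $\lambda N$ and isolating the $\lambda_1 \lambda_2$ coefficient gives the bilinear identity $C_1 B_2 + C_2 B_1 = 0$ for every $N_1, N_2 \in V$.

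The remaining task is to parlay these constraints into the sharp bound $\dim V \le sn \le rn$, producing the contradiction, and this is the hard part. The identity $D = 0$ alone places $V$ in an ambient space of dimension $2sn - s^2$, which is not sharp; the bilinear identity must do the rest. Letting $\mathcal B \subseteq M_{s,n-s}$ and $\mathcal C \subseteq M_{n-s,s}$ be the images of the projections $N \mapsto B$ and $N \mapsto C$, the identity says that the antisymmetrized pairing $(C,B),(C',B') \mapsto CB' + C'B$ vanishes on $V$. A further adapted change of basis combined with an induction on $s$ should then force $V$ inside the space of matrices supported in the first $s$ rows or in the first $s$ columns, each of dimension $sn$. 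I expect this final structural step to be the principal obstacle; it is the combinatorial-algebraic heart of Flanders' classification of subspaces of bounded-rank matrices, and what distinguishes the slack bound $2sn - s^2$ coming from the linear constraint $D = 0$ from the sharp bound $sn$ coming from the full set of quadratic constraints.
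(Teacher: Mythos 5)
You are attempting the Flanders--Atkinson--Lloyd route (maximal-rank element, Schur complement, polarized quadratic identities), which is a legitimate strategy but a different one from Meshulam's combinatorial argument --- the one the paper adapts in its proof of Theorem~\ref{thm:main} (echelon form with distinct leading positions, pigeonholing onto diagonals, and a triangular principal submatrix). As written, your argument has two genuine gaps. The first is the very first step: the reduction to an infinite field is not ``standard'' here. The hypothesis you need to transport is that every $(r+1)\times(r+1)$ minor vanishes on $V$, i.e.\ that certain polynomials of degree $r+1$ in $\dim(V)$ variables vanish at every $\F$-point of $V$. When $|\F|\le r$ this does not force them to vanish identically as polynomials (compare $x^{|\F|}-x$), so the extended space $V\otimes_{\F}K$ may contain matrices of rank greater than $r$, and a maximal-rank element of the extended space tells you nothing about $V$. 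This is not a technicality: Flanders' theorem carries a field-size hypothesis of roughly $|\F|\ge r+1$ for exactly this reason, and the content of Theorem~\ref{thm:meshulam}, as stated, is that the bound holds over \emph{every} field.

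The second gap is the one you flag yourself. Passing from $D=0$ and the polarized identities $C_1B_2+C_2B_1=0$ (and their higher analogues $C_1A_2B_3+\cdots=0$) to the sharp bound $\dim V\le sn$ is the entire theorem, not a finishing touch: the linear constraint $D=0$ only yields $2sn-s^2$, and the claimed conclusion that a change of basis plus induction on $s$ forces $V$ into the matrices supported on the first $s$ rows or the first $s$ columns is not a correct structural statement in general (bounded-rank spaces need not be of this compression form, which is precisely why the classification is delicate); even the dimension bound alone requires the full Atkinson--Lloyd induction, which again uses the field to be large. A proof that stops where the hard part begins is a plan rather than a proof. If you want a complete, field-independent argument, follow the combinatorial template visible in the paper's proof of Theorem~\ref{thm:main}: choose a basis of $V$ whose leading positions (in a fixed ordering of $[n]^2$) are pairwise distinct, find many of them on a single diagonal, and exhibit a linear combination whose restriction to the corresponding rows and columns is triangular with nonzero diagonal, hence of large rank.
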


Here we prove a version of this result for tensors over finite fields.
Identify a $d$-linear form~$T:\F^n\times\cdots\times\F^n\to \F$ with the order-$d$ tensor with $(i_1,\dots,i_d)$-coordinate $T(e_{i_1},\dots,e_{i_d})$, where~$e_i$ is the $i$th standard basis vector in~$\F^n$.
A tensor of order~$d$ will be referred to as a $d$-tensor.
The notion of rank for tensors we consider is the analytic rank, introduced by Gowers and Wolf in~\cite{GowersWolf:2011}.
 
\begin{definition}[Bias and analytic rank]
Let~$d\geq 2$ and $n\geq 1$ be integers.
Let~$\F$ be a finite field and let~$\chi:\F\to\C$ be a nontrivial additive character.
Let~$T\in \F^{n\times\cdots\times n}$ be a~$d$-tensor.
Then, the \emph{bias} of~$T$ is defined by\footnote{Here and elsewhere, for a finite set~$X$, we denote $\Exp_{x_1,\dots,x_k\in X}f(x_1,\dots,x_k) = |X|^{-k}\sum_{x_1\in X}\dots \sum_{x_k\in X}f(x_1,\dots,x_k)$ and $\Pr_{x_1,\dots,x_k\in X}$ denotes the probability with respect to independent uniformly distributed elements~$x_1,\dots,x_k\in X$.}
\beqn
\bias(T)
=
\Exp_{x_1,\dots,x_d\in \F^n}\chi\big(T(x_1,\dots,x_d)\big),
\eeqn
and the \emph{analytic rank} of~$T$ is defined by
\beqn
\arank(T) = -\log_{|\F|}\bias(T).
\eeqn
\end{definition}

The bias is well-defined, since its value is independent of the choice of nontrivial additive character and it is not hard to see that it is real and nonnegative.
Moreover, for any $d \geq 2$, the analytic rank is at most~$n$ and for matrices ($d=2$), the analytic rank is the ordinary matrix rank.\\

Our version of Theorem~\ref{thm:meshulam} is then as follows. 

\begin{theorem}\label{thm:main}
For every finite field~$\F$ and integer~$d\geq 2$, there is a $c\in (0,1]$ such that the following holds.
Let $n\geq t\geq r\geq 1$ be integers and $V\subseteq \F^{n\times \cdots \times n}$ be a subspace of~$d$-tensors.
If $\dim(V) \geq tn^{d-1}$, then there is a subspace~$W\subseteq V$ of dimension at least~$\frac{t}{dr}-1$ such that every nonzero element in~$W$ has analytic rank at least~$cr$. 
\end{theorem}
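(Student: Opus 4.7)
The plan is a greedy construction: build $W$ one basis vector at a time, maintaining the invariant that every nonzero element of the current subspace has analytic rank at least $cr$.

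The key external input is a bound on the size of the set $L\subseteq\F^{n\times\cdots\times n}$ of tensors of analytic rank strictly below $cr$. By the (now standard) equivalence between analytic rank and partition rank over finite fields (Cohen--Moshkovitz, Mili\'{c}evi\'{c}, Adiprasito--Kazhdan--Ziegler), there is a constant $C_{d,p}$ such that every $d$-tensor of analytic rank at most $s$ has partition rank at most $C_{d,p}s$. A tensor of partition rank $\leq k$ is a sum of $k$ products across nontrivial bipartitions of its $d$ coordinates, each summand being parameterised by at most $2^{d-1}$ bipartitions and a pair of lower-order tensors of total dimension $\leq 2n^{d-1}$. Together these two facts give $|L|\leq|\F|^{C'_{d,p}\,cr\,n^{d-1}}$ for some constant $C'_{d,p}$ depending only on $d$ and $p$, and setting $c:=1/(dC'_{d,p})$ yields $\log_{|\F|}|L|\leq rn^{d-1}/d$.

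The inductive step is then a one-line union bound. Suppose $W_j=\vspan(v_1,\ldots,v_j)\subseteq V$ already satisfies the invariant. A candidate $v_{j+1}\in V\setminus W_j$ preserves the invariant for $W_{j+1}:=W_j+\F v_{j+1}$ precisely when $v_{j+1}+w\notin L$ for every $w\in W_j$, because analytic rank is invariant under multiplication by nonzero scalars, and so every nonzero element of $W_{j+1}\setminus W_j$ is, up to rescaling, of the form $v_{j+1}+w$ with $w\in W_j$. The set of $v_{j+1}\in V$ violating this can be written as $\{z-w:z\in V\cap L,\ w\in W_j\}$ and therefore has size at most $|W_j|\cdot|V\cap L|\leq|\F|^j|L|$. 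Excluding also the $|W_j|=|\F|^j$ vectors of $W_j$ itself, a valid $v_{j+1}$ exists whenever $|V|>|\F|^j(|L|+1)$, equivalently whenever $j<\dim V-\log_{|\F|}|L|-1$.

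Putting the pieces together, the construction continues as long as $j<tn^{d-1}-rn^{d-1}/d-1=n^{d-1}(t-r/d)-1$. Since $n^{d-1}\geq 1$, $d\geq 2$, and $t\geq r$, the right-hand side is at least $t/2-1$, which in turn is at least $t/(dr)-1$ because $dr\geq 2$. Hence the greedy process produces a subspace $W\subseteq V$ of dimension at least $t/(dr)-1$, every nonzero element of which has analytic rank $\geq cr$. The only substantive external result invoked is the linear analytic--partition rank equivalence with a constant depending only on $d$ and $p$; I expect this to be the main obstacle in the sense that the rest of the argument reduces to a direct counting union bound.
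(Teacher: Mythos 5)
Your greedy/union-bound skeleton is sound: granting your bound on $|L|$, the count of ``bad'' extensions is correct, analytic rank is indeed scalar-invariant, and the construction would in fact produce a subspace of dimension about $(1-\tfrac1d)tn^{d-1}$, far more than required. The problem is the single external input you lean on. You assert that every $d$-tensor of analytic rank at most $s$ over $\F_p$ has partition rank at most $C_{d,p}\,s$. No such linear bound is available in the generality the theorem needs (every finite field, constant depending only on $d$ and $|\F|$). The bound the paper itself records in~\eqref{eq:arank-prank} is $\prank(T)\leq O_{|\F|,d}(\arank(T)^{D})$ with $D$ of tower type; feeding that into your count gives $\log_{|\F|}|L|\lesssim (cr)^{D}n^{d-1}$, and forcing this below $rn^{d-1}/d$ makes $c$ decay like $r^{1/D-1}$, so you only obtain elements of analytic rank $\Omega(r^{1/D})$, not $\Omega(r)$. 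The later linear equivalences of Cohen--Moshkovitz require the field to be large in terms of $d$, and the quasi-linear bound that holds over all finite fields carries a $\log(\arank)$ factor, which again forces $c$ to depend on $r$. So as written the argument does not establish the theorem with a uniform constant $c$, and it is also circular relative to the paper's stated purpose, which is precisely to \emph{avoid} invoking the partition-rank--analytic-rank dictionary.

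The paper's actual proof is elementary and Meshulam-style, and needs no counting of low-rank tensors. After Gaussian elimination one may assume the basis tensors of $V$ have pairwise distinct leading coordinates in the lexicographic order on $[n]^d$; covering $[n]^d$ by at most $dn^{d-1}$ ``diagonal matchings'' and pigeonholing places at least $t/d$ of these leading coordinates on a common diagonal. Restricting to that diagonal yields tensors $Q_1,\dots,Q_{rs}$ with $\rho(Q_i)=(i,\dots,i)$, and an induction using two lemmas of Lovett --- monotonicity of analytic rank under principal restriction (Lemma~\ref{lem:arank_sub}) and the fact that adding a suitable multiple of the corner tensor $E_n$ increases analytic rank by an additive constant $c_{\F,d}$ (Corollary~\ref{cor:arank_plus}) --- produces, from each block of $r$ consecutive $Q_i$'s, a combination of analytic rank at least $c_{\F,d}r$; the block-triangular structure then shows the span of the resulting $s=\lfloor t/(dr)\rfloor$ tensors has no low-rank nonzero element. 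If you want to salvage your approach, you would need to prove the counting estimate on $|\{T:\arank(T)<cr\}|$ directly, without passing through partition rank; that is essentially an open problem of the same depth as the linear equivalence itself.
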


Theorem~\ref{thm:main} gives an analogue of Theorem~\ref{thm:meshulam} asserting that if~$V$ has dimension at least~$rn^{d-1}$, then it contains a tensor of analytic rank at least~$\Omega_{\F,d}(r)$.
The same statement holds for another notion of tensor rank, namely the partition rank, which originated in~\cite{naslund:2017}.

\begin{definition}[Partition rank]
Let $d \geq 2$ and $n\geq 1$ be integers.
A $d$-linear form $T:\F^n\times \cdots\times \F^n\to \F$ has partition rank~1 if there exist integers $1\leq e,f\leq d-1$ such that $e+f = d$, a partition $\{i_1,\dots,i_{e}\},\{j_1,\dots,j_{f}\}$ of~$[d]$ and $e$- and $f$-linear forms~$T_1,T_2$ (respectively)  such that for any $x_1,\dots,x_d\in \F^n$,
\beqn
T(x_1,\dots,x_d)
=
T_1\big(x_{i_1},\dots,x_{i_{e}}\big)\,
T_2\big(x_{j_1},\dots,x_{j_{f}}\big).
\eeqn
The partition rank of~$T$ is the smallest $r$ such that $T = T_1 + \cdots + T_r$, where each~$T_i$ has partition rank~1.
\end{definition}

Partition rank is always at most~$n$ and for matrices is also equal to the usual rank.
Independently, Kazhdan and Ziegler~\cite{Kazhdan:2018} and Lovett~\cite{Lovett:2019} proved that $\prank(T) \geq \arank(T)$ and
so Theorem~\ref{thm:main} holds for the partition rank as well.
This implies that the parameters of Theorem~\ref{thm:main} are close to optimal.
Indeed, if~$U\subseteq \F^n$ is a~$t$-dimensional subspace and~$V = \F^{n\times\cdots \times n}$ is the set of $(d-1)$-tensors, then~$U\otimes V$ is a $(tn^{d-1})$-dimensional subspace of $d$-tensors containing only tensors of partition rank (and so analytic rank) at most~$t$.
In the other direction, partition and analytic rank are polynomially related.
Independently, Mili\'cevi\'c in~\cite{Milicevic:2019} and Janzer in~\cite{Janzer:2019} proved that 
\beq\label{eq:arank-prank}
\prank(T) \leq O_{|\F|,d}\big(\arank(T)^D\big)
\eeq
for some~$D\leq 2^{2^{d^{O(1)}}}$. 
We refer to these papers for further information on bounds for specific values of~$d$.
\medskip

\subsection{Szemer\'edi's theorem with random differences}
We apply Theorem~\ref{thm:main} to a probabilistic version of Szemer\'edi's theorem~\cite{Szemeredi:1975}. 
For~$\eps\in (0,1]$ and integer~$k\geq 3$, Szemer\'edi's theorem asserts that any set $A\subseteq \Z/N\Z$ of size at least~$\eps N$ contains a proper $k$-term arithmetic progression ($k$-AP), provided~$N$ is large enough in terms of~$\eps$ and~$k$.
The setup for the probabilistic version is as follows.
Given a finite abelian group~$G$ of order~$N$ and positive integer~$m$, let~$S\subseteq G$ be a random subset formed by sampling $m$ elements from~$G$ independently and uniformly at random.
A general open problem is to determine the smallest~$m$ such that with high probability over~$S$, any set $A\subseteq G$ of size at least~$\eps N$ contains a proper $k$-AP with common difference in~$S$.
For $k = 3$, it was shown by Christ~\cite{Christ:2011} and Frantzikinakis, Lesigne and Wierdl~\cite{FLW:2012} that $m \geq \omega(\sqrt{N}\log N)$ suffices and for $k \geq 3$, it was shown by Gopi and the author in~\cite{BG:2018} that $m\geq \omega(N^{1 - \frac{1}{\ceil{k/2}}}\log N)$ does; see also~\cite{BDG:2019}.
In~\cite{FLW16} the authors conjecture that in the group~$\Z/N\Z$, for all fixed $k \geq 3$, already $m \geq \omega(\log N)$ would do.
However, in \cite{Altman:2019} Altman showed that in the finite field case, where $G =\F_p^n$ with~$p$ an odd prime, the analogous conjecture is false for 3-APs and that $m \geq \Omega_p(n^2)$ is necessary (we refer to this paper for more information).
Using Theorem~\ref{thm:main}, we generalize Altman's result to arbitrarily long~APs.

\begin{theorem}\label{thm:krandsz}
For every integer~$k\geq 3$ and prime~$p\geq k$ there is a constant~$C$ such that the following holds.
If $S\subseteq \F_p^n$ is a set formed by selecting at most ${n+k-2\choose k-1} - C(\log_p n)^2n^{k-2}$ elements independently and uniformly at random, then with probability $1 - o(1)$ there is a set $A\subseteq \F_p^n$ of size $|A| \geq \Omega_{k,p}(p^n)$ that contains no proper $k$-term arithmetic progression with common difference in~$S$.
\end{theorem}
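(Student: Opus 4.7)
\textit{Plan.} I will construct $A = \{x \in \F_p^n : Q(x) = c\}$ for a nonzero homogeneous polynomial $Q$ of degree $k-1$ to be produced and $c \in \F_p$ chosen by pigeonhole so that $|A| \geq p^{n-1} = \Omega_{k,p}(p^n)$. If $x, x+s, \ldots, x + (k-1)s$ all lie in $A$, then the $(k-1)$-st iterated finite difference of the constant sequence $(c,\ldots,c)$ vanishes, but it also equals $(k-1)!\, Q(s)$ by the identity $\Delta_s^{k-1} Q(x) = (k-1)!\, Q(s)$ for homogeneous $Q$ of degree $k-1$. Since $p \geq k$, this forces $Q(s) = 0$. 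Hence it suffices to produce, with probability $1 - o(1)$ over random $S$, a nonzero homogeneous $Q$ of degree $k-1$ with $Q(s) \neq 0$ for every $s \in S$.

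\textit{Reduction to surjectivity.} Let $V_0 = \sym^{k-1}(\F_p^n) \subseteq \F_p^{n\times\cdots\times n}$, with $\dim V_0 = D = \binom{n+k-2}{k-1}$, and consider the evaluation map $\phi_S : V_0 \to \F_p^S$, $Q \mapsto (Q(s))_{s\in S}$. Surjectivity of $\phi_S$ suffices: any preimage of $(1,\ldots,1)$ furnishes such a $Q$. Writing $S = (s_1,\ldots,s_m)$ with $m = |S|$ and $V_i = \{Q \in V_0 : Q(s_j) = 0 \text{ for } j \leq i\}$, surjectivity is equivalent to $s_i \notin Z(V_{i-1})$ for every $i \in [m]$, where $Z(U) = \{s \in \F_p^n : Q(s) = 0 \; \forall Q \in U\}$. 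At every step $\dim V_{i-1} \geq D - m \geq C(\log_p n)^2 n^{k-2}$, so Theorem~\ref{thm:main} with $d = k-1$ (for $C$ large enough in terms of $k$ and $p$) yields a subspace $W \subseteq V_{i-1}$ of dimension $L := \lceil k\log_p n\rceil$ such that every nonzero $Q \in W$ has associated symmetric tensor $T_Q$ of analytic rank at least $2^{k-1} L$.

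\textit{Fourier bound and union bound.} By Fourier inversion on the subspace $W$,
\beqn
\frac{|Z(W)|}{p^n} \;=\; \frac{1}{p^L} \sum_{Q \in W} \Exp_x \chi(Q(x)).
\eeqn
For homogeneous $Q$ of degree $k-1$ with $p > k-1$, the iterated-difference identity $\Delta_{h_1}\cdots\Delta_{h_{k-1}} Q \equiv (k-1)!\, T_Q(h_1,\ldots,h_{k-1})$ implies the Gowers-norm identity $\lVert \chi(Q)\rVert_{U^{k-1}}^{2^{k-1}} = \bias(T_Q)$. Together with the monotonicity $|\Exp_x \chi(Q(x))| = \lVert \chi(Q)\rVert_{U^1} \leq \lVert \chi(Q)\rVert_{U^{k-1}}$, this gives $|\Exp_x \chi(Q(x))| \leq \bias(T_Q)^{1/2^{k-1}} \leq p^{-\arank(T_Q)/2^{k-1}} \leq p^{-L}$ for every nonzero $Q \in W$. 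Hence the triangle inequality yields $|Z(V_{i-1})|/p^n \leq |Z(W)|/p^n \leq 2 p^{-L}$, and a union bound over $i \in [m]$ gives
\beqn
\Pr[\phi_S \text{ not surjective}] \;\leq\; \sum_{i=1}^m \Pr[s_i \in Z(V_{i-1})] \;\leq\; 2 m p^{-L} \;\leq\; 2 D \cdot n^{-k} \;=\; O(n^{-1}).
\eeqn
The main technical step is the polynomial-to-tensor bias transfer via the Gowers-norm identity above, which is where the hypothesis $p \geq k$ is used crucially; once that is in place, the remainder is a clean application of Theorem~\ref{thm:main} combined with the standard Fourier identity for the subspace $W$.
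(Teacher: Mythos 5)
Your proof is correct and follows essentially the same route as the paper: the reduction to surjectivity of the evaluation map on degree-$(k-1)$ forms is Altman's linear-independence condition in disguise, and the probabilistic heart --- extracting a high-analytic-rank subspace $W\subseteq V_{i-1}$ via Theorem~\ref{thm:main} and bounding $|Z(W)|/p^n$ by a Fourier/Gowers-norm bias estimate before a union bound over the sampling steps --- is exactly the paper's Proposition~\ref{prop:inU} and Lemma~\ref{lem:main}. The only deviations are cosmetic and harmless: you inline Altman's lemma via the iterated-difference identity $\Delta_s^{k-1}Q=(k-1)!\,Q(s)$, and you replace the Chevalley--Warning bound on $|\{Q=0\}|$ by a pigeonhole over the level sets $\{Q=c\}$.
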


In particular, for $N = p^n$ at least $\Omega((\log_p N)^{k-1})$ elements must be sampled for Szemer\'edi's theorem with random differences and $k$-APs over~$\F_p^n$.
Showing (much) stronger lower bounds, possibly over other groups (including non-abelian groups), is of interest for coding theory~\cite{BDG:2019}.
In~\cite{Altman:2019}, the case $k = 3$ of Theorem~\ref{thm:krandsz} is proved without squaring the logarithmic factor.
The proof given there uses both the analytic and algebraic characterization of matrix rank and can be generalized using the relations between analytic and partition rank.
But the best-known relations~\eqref{eq:arank-prank} cause the exponent of the logarithmic factor to blow up substantially and currently require fairly intricate proofs. 
Theorem~\ref{thm:main} allows one to avoid the use of partition rank altogether gives a proof based more easily-established results.

\subsubsection*{Acknowledgements.}
I thank Farrokh Labib and Michael Walter for useful discussions.

\section{Proof of Theorem~\ref{thm:main}}

We use some results of Lovett~\cite{Lovett:2019} and corollaries thereof.
Let~$\F$ be a finite field.
For a $d$-tensor $T\in \F^{n\times \cdots\times n}$ and set $S\subseteq [n]:=\{1,\dots,n\}$, denote by~$T_{|S}$ the principal sub-tensor obtained by restricting~$T$ to $S\times\cdots\times S$.
It will be convenient to slightly extend the definitions of bias and analytic rank.
For finite sets~$S_1,\dots,S_d$, $d$-tensor $T\in \F^{S_1\times\cdots\times S_d}$ and non-trivial additive character~$\chi:\F\to \C$, define
\beqn
\bias(T) = \Exp_{(x_1,\dots,x_d)\in \F^{S_1}\times\cdots\times \F^{S_d}}\chi\big(T(x_1,\dots,x_d)\big)
\eeqn
and define $\arank(T)$ as before.

\begin{lemma}[Lovett]\label{lem:arank_sub}
Let~$T\in \F^{n\times\cdots\times n}$ be a $d$-tensor and $S\subseteq [n]$.
Then, 
\beqn
\arank(T) \geq \arank\!\big(T_{|S}\big).
\eeqn
\end{lemma}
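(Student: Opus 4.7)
The plan is to peel the index set $[n]$ down to $S$ one tensor direction at a time. The key tool is a probabilistic reformulation of bias: for any finite sets $A_1,\ldots,A_d$ and multilinear form $U\in\F^{A_1\times\cdots\times A_d}$, integrating out the first variable and using $\Exp_{x\in\F^{A_1}}\chi(\ell(x))=\1[\ell\equiv 0]$ for any linear form $\ell$ on $\F^{A_1}$ yields
\[
\bias(U) \;=\; \Pr_{x_2\in\F^{A_2},\ldots,x_d\in\F^{A_d}}\big[\,U(e_i,x_2,\ldots,x_d)=0\ \text{for all}\ i\in A_1\,\big].
\]

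Let $T^{(k)}$ for $k=0,1,\ldots,d$ be the sub-tensor of $T$ obtained by restricting the first $k$ directions to $S$; so $T^{(0)}=T$ and $T^{(d)}=T_{|S}$. It suffices to show $\bias(T^{(k)})\leq\bias(T^{(k+1)})$ for each $k<d$. I would apply the characterization above after integrating out the $(k+1)$-th variable: this variable ranges over $\F^n$ in $\bias(T^{(k)})$ but over $\F^S$ in $\bias(T^{(k+1)})$. The remaining variables $y=(y_1,\ldots,y_k)\in(\F^S)^k$ and $x=(x_{k+2},\ldots,x_d)\in(\F^n)^{d-k-1}$ are then drawn from the same distribution in both probabilities, and from the definitions one checks $T^{(k)}(y,e_i,x) = T^{(k+1)}(y,e_i,x) = T(y,e_i,x)$ whenever $i\in S$. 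Hence
\[
\bias(T^{(k)})=\Pr_{y,x}\big[\,T(y,e_i,x)=0\ \text{for all}\ i\in[n]\,\big],\qquad \bias(T^{(k+1)})=\Pr_{y,x}\big[\,T(y,e_i,x)=0\ \text{for all}\ i\in S\,\big],
\]
and since $S\subseteq[n]$ the first event is contained in the second, giving $\bias(T^{(k)})\leq\bias(T^{(k+1)})$.

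Chaining these $d$ inequalities yields $\bias(T)\leq\bias(T_{|S})$, and taking $-\log_{|\F|}$ of both sides gives $\arank(T)\geq\arank(T_{|S})$. There is no serious obstacle here; the argument is essentially bookkeeping once the probabilistic characterization of bias is in hand, and the only care needed is tracking which index range ($[n]$ or $S$) is active in the direction being integrated out at each stage.
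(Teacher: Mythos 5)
Your proof is correct. Note first that the paper itself gives no proof of this lemma; it is imported from Lovett's paper, so there is no in-text argument to compare against. Your route — writing the bias as a probability by integrating out one variable, $\bias(U)=\Pr_{x_2,\dots,x_d}[U(e_i,x_2,\dots,x_d)=0 \text{ for all } i]$, and then observing that shrinking the index set from $[n]$ to $S$ only enlarges the vanishing event while leaving the distribution of the remaining variables unchanged — is sound: the character sum over a linear form is $\1[\ell\equiv 0]$, the identification $T^{(k)}(y,e_i,x)=T^{(k+1)}(y,e_i,x)$ for $i\in S$ is just the definition of restriction, and the chain of $d$ one-direction inequalities together with the monotonicity of $-\log_{|\F|}$ finishes the job. (The biases involved are strictly positive, e.g.\ at least $|\F|^{-(d-1)n}$, so the logarithms are well-defined.) It is worth pointing out that the shortest derivation \emph{from results already stated in the paper} would instead use Lemma~\ref{lem:lovettlem}: writing $\F^n=U\oplus V$ with $U=\vspan(e_i: i\in S)$, one has $\bias(T)=\Exp_{v_1,\dots,v_d\in V}\Exp_{u_1,\dots,u_d\in U}\chi(T(u_1+v_1,\dots,u_d+v_d))\leq \Exp_{u_1,\dots,u_d\in U}\chi(T(u_1,\dots,u_d))=\bias(T_{|S})$, which is essentially Lovett's original argument. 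That route has the advantage of extending to restrictions onto arbitrary complemented subspaces, whereas yours is more elementary (no Cauchy--Schwarz) and makes the nonnegativity of the bias transparent. Either way, your argument stands on its own.
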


\begin{corollary}\label{cor:arank_sub}
Let~$T\in \F^{n\times\cdots\times n}$ be a $d$-tensor,
let~$S_1,\dots,S_d\subseteq [n]$ be sets of equal size and let $T'\in\F^{S_1\times\cdots\times S_d}$ be the restriction of~$T$ to $S_1\times\cdots\times S_d$.
Then, 
\beqn
\arank(T) \geq \arank(T').
\eeqn
\end{corollary}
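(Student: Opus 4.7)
The plan is to reduce the general restriction case to the principal sub-tensor case handled by Lemma~\ref{lem:arank_sub}. The key observation is that bias (and hence analytic rank) is invariant under permuting the indices in each tensor mode independently.

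First I would verify this invariance. Given permutations $\sigma_1,\dots,\sigma_d$ of~$[n]$, define $\tilde T \in \F^{n\times\cdots\times n}$ by $\tilde T_{i_1,\dots,i_d} = T_{\sigma_1(i_1),\dots,\sigma_d(i_d)}$. Then a change of variables $y_{j} = x_j \circ \sigma_j^{-1}$ (which is a bijection of $\F^n$) shows
\[
\bias(\tilde T) = \E_{x_1,\dots,x_d \in \F^n} \chi\Big(\sum_{i_1,\dots,i_d} T_{\sigma_1(i_1),\dots,\sigma_d(i_d)} x_{1,i_1}\cdots x_{d,i_d}\Big) = \bias(T),
\]
so $\arank(\tilde T) = \arank(T)$.

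Next I would apply this invariance to reduce to the principal case. Let $m = |S_1| = \cdots = |S_d|$, and for each $j\in[d]$ choose a permutation $\sigma_j$ of~$[n]$ with $\sigma_j(\{1,\dots,m\}) = S_j$. Define $\tilde T$ as above using these permutations. Then the principal sub-tensor $\tilde T_{|\{1,\dots,m\}}$ is, by construction, the tensor indexed by $(i_1,\dots,i_d) \in \{1,\dots,m\}^d$ with entry $T_{\sigma_1(i_1),\dots,\sigma_d(i_d)}$; up to relabeling its indices this is exactly the restriction~$T'$ of~$T$ to $S_1\times\cdots\times S_d$, so $\arank(\tilde T_{|\{1,\dots,m\}}) = \arank(T')$.

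Finally, applying Lemma~\ref{lem:arank_sub} to~$\tilde T$ with $S = \{1,\dots,m\}$ gives $\arank(\tilde T) \geq \arank(\tilde T_{|\{1,\dots,m\}})$, and chaining the equalities yields $\arank(T) = \arank(\tilde T) \geq \arank(\tilde T_{|\{1,\dots,m\}}) = \arank(T')$. There is no serious obstacle: the only thing to check carefully is the change-of-variables step and the fact that $\arank$ of a tensor is unchanged by relabeling its index sets (which is immediate from the definition, since the bias expectation only uses the cardinalities of these sets).
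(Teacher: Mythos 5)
Your proposal is correct and follows essentially the same route as the paper: permute each leg of $T$ so that the restriction becomes a principal sub-tensor, use invariance of analytic rank under such per-mode permutations, and then apply Lemma~\ref{lem:arank_sub}. The only difference is cosmetic (the paper maps each $S_i$ onto $S_1$, while you map $\{1,\dots,m\}$ onto each $S_j$), and your explicit change-of-variables verification of the permutation invariance is a welcome detail the paper leaves implicit.
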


\begin{proof}
Let~$\pi_2,\dots,\pi_d:[n]\to[n]$ be permutations such that $\pi_i(S_i) = S_1$.
Let~$Q$ be the $d$-tensor obtained by permuting the $i$th leg of~$T$ according to~$\pi_i$.
Then, since analytic rank is invariant under such permutations, it follows from Lemma~\ref{lem:arank_sub} that
\beqn
\arank(T) = \arank(Q) \geq \arank\!\big(Q_{|S_1}\big) = \arank(T'),
\eeqn
where the second equality follows since~$Q_{|S_1}$ is a permutation of~$T'$.
\end{proof}

\begin{lemma}[Lovett]\label{lem:lovettlem}
Let~$\chi:\F\to\C$ be a nontrivial additive character.
Let~$T\in \F^{n\times\cdots\times n}$ be a $d$-tensor and let $\F^n = U\oplus V$ for two subspaces~$U,V$.
Then, for any $v_1,\dots,v_d\in V$, 
\beqn
\Big|
\Exp_{u_1,\dots,u_d\in U}\chi\big(T(u_1+v_1,\dots,u_d+v_d)\big)\Big|
\leq
\Exp_{u_1,\dots,u_d\in U}\chi\big(T(u_1,\dots,u_d)\big).
\eeqn
\end{lemma}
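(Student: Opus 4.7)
The plan is to proceed by induction on $d$. The base case $d=1$ is immediate: since $T$ is linear, $\chi(T(u_1+v_1))=\chi(T(v_1))\chi(T(u_1))$, so the unit-modulus scalar $\chi(T(v_1))$ factors out of the average, and $|\E_{u_1\in U}\chi(T(u_1+v_1))|=|\E_{u_1\in U}\chi(T(u_1))|$, which equals $0$ or $1$ according to whether $T|_U\not\equiv 0$ or $T|_U\equiv 0$.

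For the inductive step, assuming the lemma for all $(d-1)$-tensors, I would fix $u_d\in U$ and apply the hypothesis to the $(d-1)$-tensor $S_{u_d}(x_1,\dots,x_{d-1}):=T(x_1,\dots,x_{d-1},u_d+v_d)$ with the same decomposition $\F^n=U\oplus V$ and shifts $v_1,\dots,v_{d-1}\in V$. Averaging over $u_d$ and using the triangle inequality would then bound the left-hand side of the lemma by
\[
B := \E_{u_1,\dots,u_d\in U}\chi\big(T(u_1,\dots,u_{d-1},u_d+v_d)\big).
\]

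To finish I would show $B\leq \E_{u_1,\dots,u_d\in U}\chi(T(u_1,\dots,u_d))$ by a direct calculation, without invoking the hypothesis a second time. Swapping the order of averaging and using multilinearity in the last slot, $T(u_1,\dots,u_{d-1},u_d+v_d)=T(u_1,\dots,u_{d-1},u_d)+T(u_1,\dots,u_{d-1},v_d)$; multiplicativity of $\chi$ lets the $v_d$-dependent factor come outside, while averaging a nontrivial additive character of a linear form in $u_d\in U$ gives $0$ unless the form is identically zero, yielding
\[
B = \E_{u_1,\dots,u_{d-1}\in U}\chi\big(T(u_1,\dots,u_{d-1},v_d)\big)\cdot\mathbf{1}\big[T(u_1,\dots,u_{d-1},\,\cdot\,)|_U\equiv 0\big].
\]
Pulling the absolute value inside (using $|\chi|=1$) yields $|B|\leq \E_{u_1,\dots,u_{d-1}\in U}\mathbf{1}[\,\cdot\,]$, which the identical computation with $v_d=0$ identifies as $\E_{u_1,\dots,u_d\in U}\chi(T(u_1,\dots,u_d))$. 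Moreover, for each fixed $u_d$ the inner average defining $B$ is the (unshifted) bias of a $(d-1)$-tensor, hence real and non-negative; so $B$ is itself a non-negative real number and $B=|B|$, which closes the induction.

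The main subtlety will be to notice that a single application of the inductive hypothesis suffices: after that step one is left with a character sum against an indicator of a linear-algebraic condition rather than another shifted bias, and it can be bounded directly once the a priori non-negativity of $B$ is recorded, which is what allows the absolute value to be removed at the last step.
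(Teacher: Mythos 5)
Your proof is correct. Note that the paper does not actually prove this lemma --- it is quoted from Lovett~\cite{Lovett:2019} --- so there is no in-paper argument to compare against; your induction on $d$ is a valid, self-contained derivation. The two points that need care both check out. First, the right-hand side of the inductive hypothesis applied to $S_{u_d}$, namely $\E_{u_1,\dots,u_{d-1}\in U}\chi\big(S_{u_d}(u_1,\dots,u_{d-1})\big)$, is indeed real and nonnegative (averaging over the last remaining variable turns the inner expectation into an indicator), so averaging over $u_d$ produces a nonnegative real $B$ and the concluding step $B=|B|$ is legitimate. Second, the identity $\E_{u_d\in U}\chi\big(T(u_1,\dots,u_{d-1},u_d)\big)=\1\big[T(u_1,\dots,u_{d-1},\cdot)|_U\equiv 0\big]$ holds because the image of the $\F$-linear map $u\mapsto T(u_1,\dots,u_{d-1},u)$ on $U$ is an $\F$-subspace of $\F$, hence either $\{0\}$ or all of $\F$, and $\chi$ is nontrivial; this is worth spelling out, since for non-prime fields $\chi$ may be trivial on a proper additive subgroup of $\F$, so ``the character of a nonzero linear form averages to zero'' needs exactly this subspace argument rather than a pointwise one.
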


\begin{corollary}\label{cor:arank_plus}
Let~$T\in\F^{n\times \cdots\times n}$ be a $d$-tensor
and $E_n = e_n\otimes\cdots\otimes e_n$ be the $d$-tensor with a~1 at its last coordinate and zeros elsewhere.
Then, there exists a $\lambda\in \F$ such that
\beqn
\arank(T+\lambda E_n) \geq  \arank\!\big(T_{|[n-1]}\big) + c_{\F,d},
\eeqn
where
\beq\label{eq:cFd}
c_{\F,d} = -\log_{|\F|}\Big(1 - \Big(\frac{|\F| - 1}{|\F|}\Big)^d\Big).
\eeq
\end{corollary}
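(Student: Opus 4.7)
The plan is to decompose $\F^n$ along the direction $e_n$ so that $E_n$ becomes a simple product of scalars, and then use character orthogonality to average over $\lambda\in\F$ and extract a good choice.

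First I would set $U=\vspan(e_1,\dots,e_{n-1})$ and $V=\F\cdot e_n$, giving $\F^n=U\oplus V$. Writing each input as $x_i=u_i+\mu_i e_n$ with $u_i\in U$ and $\mu_i\in\F$, one has $E_n(x_1,\dots,x_d)=\mu_1\cdots\mu_d$, depending only on the $\mu_i$'s. Introducing
\beqn
f(\mu_1,\dots,\mu_d) \;=\; \Exp_{u_1,\dots,u_d\in U}\chi\big(T(u_1+\mu_1 e_n,\dots,u_d+\mu_d e_n)\big),
\eeqn
the definition of bias gives
\beqn
\bias(T+\lambda E_n) \;=\; \Exp_{\mu\in \F^d}\chi(\lambda \mu_1\cdots\mu_d)\,f(\mu).
\eeqn
Lemma~\ref{lem:lovettlem}, applied to the decomposition above with $v_i=\mu_i e_n$, directly yields $|f(\mu)|\leq f(0,\dots,0)=\bias\!\big(T_{|[n-1]}\big)$, where the last equality holds because $f(0,\dots,0)$ only involves entries of $T$ indexed by $[n-1]^d$.

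Next I would average over $\lambda\in\F$ and use additive-character orthogonality $\Exp_{\lambda\in\F}\chi(\lambda t)=\1[t=0]$ to obtain
\beqn
\Exp_{\lambda\in\F}\bias(T+\lambda E_n)
\;=\; \Exp_\mu \1[\mu_1\cdots\mu_d=0]\,f(\mu)
\;\leq\; \Pr_{\mu\in\F^d}[\mu_1\cdots\mu_d=0]\cdot \bias\!\big(T_{|[n-1]}\big).
\eeqn
The probability that some $\mu_i$ vanishes equals $1-((|\F|-1)/|\F|)^d=|\F|^{-c_{\F,d}}$. Since $\bias$ is always a nonnegative real, some $\lambda\in\F$ must achieve a bias at most this average, and taking $-\log_{|\F|}$ of both sides gives $\arank(T+\lambda E_n)\geq \arank(T_{|[n-1]})+c_{\F,d}$.

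The only real subtlety is the interplay between Lovett's lemma, which delivers an absolute-value bound on $f(\mu)$, and the averaging step, which needs a single $\lambda$ (not merely a real-part estimate). This is handled by invoking the fact, recorded after the bias definition, that $\bias$ is real and nonnegative, so the minimum over $\lambda$ is at most the average, and the chain of inequalities $\Exp_\lambda\bias(T+\lambda E_n)\leq \Exp_\mu\1[\mu_1\cdots\mu_d=0]|f(\mu)|\leq \Pr[\mu_1\cdots\mu_d=0]\cdot\bias(T_{|[n-1]})$ stays valid throughout.
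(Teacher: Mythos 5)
Your proposal is correct and follows essentially the same route as the paper: the same decomposition $\F^n = U \oplus \F e_n$, averaging the bias over $\lambda\in\F$, using character orthogonality to produce the indicator $\1[\mu_1\cdots\mu_d=0]$, and invoking Lemma~\ref{lem:lovettlem} to bound $|f(\mu)|$ by $\bias(T_{|[n-1]})$. The only cosmetic difference is that the paper packages the final bound via H\"older's inequality while you bound $|f(\mu)|$ termwise, and you are slightly more careful in spelling out why the real-nonnegativity of the bias lets one pass from the averaged estimate to a single $\lambda$.
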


\begin{proof}
Let $U= \vspan(e_1,\dots,e_{n-1})$ and $V$ be the line spanned by~$e_n$.
We consider the average bias of the tensor $T + \lambda E_n$, where~$\lambda$ is uniformly distributed over~$\F$.
This average equals
\begin{align*}
\Exp_{\lambda\in \F} \Exp_{u_1,\dots,u_d\in U}\Exp_{v_1,\dots,v_n\in V}\chi\big((T+\lambda E_n)(u_1+v_1,\dots,u_d+v_d)\big).
\end{align*}
The character expression factors as
\beqn
\chi\big(T(u_1+v_1,\dots,u_d+v_d)\big)
\chi(\lambda E_n(u_1+v_1,\dots,u_d+v_d)\big).
\eeqn
Writing $v_i = a_ie_n$, then the second factor simplifies to $\chi(\lambda a_1\cdots a_d)$.
Hence, the average bias of~$T + \lambda E_n$ equals
\begin{align*}
\Exp_{a\in \F^d}
\Big(
\Exp_{u_1,\dots,u_n\in U}
\chi\big(T(u_1+a_1e_n,\dots,u_d+a_de_n)\big)
\Big)
\Big(\Exp_{\lambda}
\chi(\lambda a_1\cdots a_d)
\Big).
\end{align*}
The expectation over~$\lambda$ equals $1[a_1\cdots a_d =0]$.
Hence, by H\"{o}lder's inequality, the average bias is at most
\beqn
\max_{v_1,\dots,v_d\in V}
\Big|
\Exp_{u_1,\dots,u_n\in U}
\chi\big(T(u_1+v_1,\dots,u_d+v_d)\big)
\Big|
\Pr_{a_1,\dots,a_d\in \F}[a_1\cdots a_d = 0].
\eeqn
The result now follows from Lemma~\ref{lem:lovettlem}.
\end{proof}
\medskip

We now prove Theorem~\ref{thm:main} following similar lines as Meshulam's proof of Theorem~\ref{thm:meshulam}.
\medskip

\begin{proof}[ of Theorem~\ref{thm:main}]
Order~$[n]^d$ lexicographically.
For a $d$-tensor $T\in \F^{n\times\cdots\times n}$, let~$\rho(T)$ denote its first nonzero coordinate.
Let~$T_1,\dots,T_{\dim(V)}$ be a basis for~$V$.
By Gaussian elimination (viewing the~$T_i$ as vectors in~$\F^{n^d}$), we can assume that the coordinates~$\rho(T_i)$ are pairwise distinct.

Cover~$[n]^d$ by the ``diagonal matchings'' given by
\begin{align*}
\big\{(0,n_1,\dots,n_{d-1})+(i,\dots,i) &\st i \in[n-\max_{l\in[d-1]}n_l]\big\}\\
\big\{(n_1,0,\dots,n_{d-1})+(i,\dots,i) &\st i \in[n-\max_{l\in[d-1]}n_l]\big\}\\
&\vdots\\
\big\{(n_1,\dots,n_{d-1},0)+(i,\dots,i) &\st i \in[n-\max_{l\in[d-1]}n_l]\big\},
\end{align*}
for $n_1,\dots,n_{d-1}\in \{0,\dots,n-1\}$.
This is a cover since the coordinate $(i_1,\dots,i_d)$ with $j = \min\{i_1,\dots,i_d\}-1$ lies in the matching whose smallest element (with respect to the product order) is $(i_1 - j,\dots,i_j-j)$.
Since there are at most~$dn^{d-1}$ such matchings and $\dim(V) \geq tn^{d-1}$, one of these matchings contains at least~$t/d$ of the coordinates~$\rho(T_1),\dots,\rho(T_{\dim(V)})$.
Let $s = \floor{t/dr}$, so that $rs \leq t/d$.

Relabelling if necessary, we can assume that~$\rho(T_1),\dots,\rho(T_{rs})$ lie in the same matching and that they are listed increasingly  according to the product order, so that
\beq\label{eq:Tirho}
\rho(T_i) = (n_1,\dots,n_{d}) + (f(i),\dots,f(i))
\eeq
for some $n_1,\dots,n_d\in\{0,\dots,n-1\}$ and strictly increasing function $f:[rs]\to [n]$.
For each $i\in[rs]$, let $Q_i \in \F^{rs\times\cdots \times rs}$ be 
tensor given by
\beqn
Q_i(i_1,\dots,i_d)
=
T_i\big((n_1,\dots,n_{d}) + (f(i_1),\dots,f(i_d))\big).
\eeqn
Then,~$Q_i$ is a sub-tensor of~$T_i$ obtained from its restriction to the rectangle $(n_1,\dots,n_d) + (\im(f))^d$.
Moreover, it follows from~\eqref{eq:Tirho} that $\rho(Q_i) = (i,\dots,i)$, which in turn implies that the restriction~$(Q_i)_{|[i]}$ is nonzero only on coordinate $(i,\dots,i)$.

Partition $[rs]$ into $s$ consecutive intervals $I_1,\dots,I_s$ of length~$r$ each.
We claim that for each $j\in [s]$, there is an $R_j\in \vspan(Q_i \st i\in I_j)$  such $\arank\big((R_j)_{|I_j}\big)\geq c_{\F,d}r$, for $c_{\F,d}$ as in~\eqref{eq:cFd}.
We prove the claim for $j =1$.
To this end, we show by induction on $i \in [r]$ that  $\vspan(Q_1,\dots,Q_i)$ contains a tensor $R$ whose restriction~$R_{|[i]}$ to $[i]\times\cdots\times[i]$ has analytic rank at least 
$ic_{\F,d}$.
For $i = 1$, the claim follows since $Q_1(1,\dots,1) = a$ for some $a\in \F^*$ and the bias of the $1\times\cdots\times 1$ tensor~$a$ equals
\begin{align*}
\Exp_{x_1,\dots,x_d\in \F}\chi(ax_1\cdots x_d)
&=
\Pr_{x_2,\dots,x_d\in \F}[x_2\cdots x_n =0]\\
&=
1 - \Big(\frac{|\F| - 1}{|\F|}\Big)^{d-1}\\
&\leq
|\F|^{-c_{\F,d}}.
\end{align*}
Assume the claim for $i \in [r-1]$ and let $R\in  \vspan(Q_1,\dots,Q_i)$ be such that $\arank(R_{|[i]}) \geq ic_{\F,d}$.
Since the restriction of~$(Q_{i+1})_{|[i+1]}$ is nonzero only on coordinate $(i+1,\dots,i+1)$,  it is a nonzero multiple of~$E_{i+1}$.
Hence, by Corollary~\ref{cor:arank_plus}, there is a $\lambda \in \F$ such that 
\begin{align*}
\arank\big((R+\lambda Q_{i+1})_{|[i+1]}\big) 
 &\geq \arank(R_{|[i]}) + c_{\F,d}
 \geq (i+1)c_{\F,d},
\end{align*}
which proves the claim.
For $j>1$ the claim is proved similarly, using induction on $i\in[r]$ to show that $\vspan(Q_{jr+1},\dots,Q_{jr + i})$ contains a tensor~$R$ such that $\arank(R_{|\{jr+1,\dots,jr+i\}}) \geq ic_{\F,d}$.

For $j\in [s]$, let $T^*_j \in \vspan(T_i \st i\in I_j)$ be the tensor whose  restriction to $(n_1,\dots,n_d) + (\im(f))^d$ equals~$R_j$.
Let $W = \vspan(T_1^*,\dots,T_s^*)$.
We claim that the space~$W$ meets the criteria of Theorem~\ref{thm:main}.
Since the sets~$I_j$ are pairwise disjoint and the tensors $T_1,\dots,T_{\dim(V)}$ linearly independent, it follows that $\dim(W) \geq s \geq \frac{t}{dr}-1$.
Let $\lambda\in \F^s\setminus\{0\}$ and let $j\in [s]$ be its first nonzero coordinate.
It follows from Corollary~\ref{cor:arank_sub} and~Lemma~\ref{lem:arank_sub} that 
\begin{align*}
\arank(\lambda_1 T_1^* + \cdots + \lambda_sT_s^*) &\geq \arank(\lambda_1 R_1 + \cdots + \lambda_sR_s)\\
&\geq 
\arank\big((\lambda_1 R_1 + \cdots + \lambda_sR_s)_{|I_j}\big)\\
&=
\arank\big(\lambda_j(R_j)_{|I_j}\big)\\
&\geq
c_{\F,d}r,
\end{align*}
where in the third line we used that $\lambda_k = 0$ for all $k\in[j-1]$ and $(R_k)_{|I_j} = 0$ for all $k\in \{j+1,\dots,s\}$, which holds since the restriction of $Q_i$ to $I_j$ is the zero tensor for all $i>j$.
Hence, every nonzero element of~$W$ has analytic rank at least~$c_{\F,d}r$.
\end{proof}

\section{Proof of Theorem~\ref{thm:krandsz}}
\label{sec:sz}

For positive integer~$d$ and $x\in \F^n$, denote $\varphi_d(x) = x\otimes\cdots\otimes x$ ($d$~times).
Then, for any $d$-tensor $T\in \F^{n\times\cdots\times n}$, we have
$
T(x,\dots,x) = \langle T,\varphi_d(x)\rangle$,
where $\langle\cdot,\cdot\rangle$ is the standard inner product.
Theorem~\ref{thm:krandsz} follows from the following two lemmas, the first of which is proved in~\cite{Altman:2019} and the second of which we prove below.

\begin{lemma}[Altman]\label{lem:indep}
Let $k\geq 3$ be an integer and~$p\geq k$ be a prime number.
Let $S\subseteq \F_p^n$ be such that the set $\varphi_{k-1}(S)$ is linearly independent.
Then, there exists a nonzero $(k-1)$-tensor $T\in \F_p^{n\times\cdots\times n}$ such that the set $\{x\in \F_p^n \st \langle T,\varphi_{k-1}(x)\rangle = 0\}$
contains no $k$-term arithmetic progressions with common difference in~$S$.
\end{lemma}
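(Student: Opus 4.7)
The plan is to construct $T$ directly from the linear independence hypothesis, and then to verify the no-$k$-AP property via a short polynomial interpolation argument over~$\F_p$.

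First, I would pick $T$ explicitly as follows. Since the vectors $\varphi_{k-1}(s)$ for $s \in S$ are linearly independent in $\F_p^{n\times\cdots\times n}$, the evaluation map
$\mathrm{ev}_S : T \mapsto \big(\langle T,\varphi_{k-1}(s)\rangle\big)_{s\in S}$
is surjective onto $\F_p^{S}$. I take $T$ to be any preimage of the all-ones vector, so that $\langle T,\varphi_{k-1}(s)\rangle \neq 0$ for every $s \in S$; in particular $T \neq 0$.

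Next, I would show that this choice of $T$ precludes any proper $k$-AP with common difference in $S$ from lying in the zero set of $P_T(y) := T(y,\dots,y) = \langle T,\varphi_{k-1}(y)\rangle$. Suppose for contradiction that $x, x+s, \dots, x+(k-1)s$ is such an AP (so $s \in S$, hence $s \neq 0$), and define the univariate polynomial $Q(j) := P_T(x+js) \in \F_p[j]$. Expanding $T(x+js,\dots,x+js)$ by multilinearity in each of its $k-1$ slots produces a polynomial of degree at most $k-1$ in $j$; the coefficient of $j^{k-1}$ comes exclusively from the single term in which every slot is filled by $s$, giving $T(s,\dots,s) = \langle T,\varphi_{k-1}(s)\rangle$, which is nonzero by construction. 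Thus $\deg Q = k-1$. On the other hand, the assumption that each $x+js$ lies in the zero set of $P_T$ forces $Q(0) = Q(1) = \cdots = Q(k-1) = 0$, and these $k$ values are pairwise distinct elements of $\F_p$ because $p \geq k$. A nonzero polynomial of degree at most $k-1$ over a field cannot have $k$ roots, a contradiction.

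The only place where the hypothesis $p \geq k$ is used is to keep the interpolation nodes $0,1,\dots,k-1$ distinct in $\F_p$; this is the one delicate point. The rest is routine linear algebra (surjectivity of $\mathrm{ev}_S$) together with multilinear bookkeeping (identifying the leading coefficient of $Q$), so I do not anticipate a substantial obstacle beyond verifying that this expansion indeed yields $\langle T,\varphi_{k-1}(s)\rangle$ as the coefficient of $j^{k-1}$.
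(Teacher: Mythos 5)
Your proof is correct. The paper does not prove this lemma itself but cites Altman's paper, and your argument is essentially the one used there (generalized from $k=3$): linear independence of $\varphi_{k-1}(S)$ gives surjectivity of the evaluation map, hence a $T$ with $T(s,\dots,s)\neq 0$ for all $s\in S$, and then $j\mapsto T(x+js,\dots,x+js)$ is a degree-$(k-1)$ polynomial that cannot vanish at the $k$ distinct points $0,\dots,k-1$ of $\F_p$ (Altman phrases this via the $(k-1)$st finite difference, which equals $(k-1)!\,T(s,\dots,s)$, but this is the same use of $p\geq k$).
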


\begin{lemma}\label{lem:main}
For every integer~$d\geq 2$ and prime ${p\geq d+1}$, there is a~$C\in (0,\infty)$ such that the following holds.
Let $m = {n+d-1\choose d}$ and let $s \leq m- C (\log_p m)^2n^{d-1}$ be an integer.
Let $x_1,\dots,x_s$ be independent and uniformly distributed random vectors from~$\F_p^n$.
Then, $\varphi_d(x_1),\dots,\varphi_d(x_s)$ are linearly independent with probability~$1-o(1)$.
\end{lemma}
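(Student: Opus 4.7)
The plan is to reveal the samples $x_1,x_2,\ldots$ one at a time, set $V_i := \vspan\{\varphi_d(x_j) : j \leq i\}$, and show that $\Pr[\varphi_d(x_{i+1}) \in V_i \mid \dim V_i = i] \leq 2m^{-2}$ for each $i < s$; a union bound over the $s \leq m$ steps then yields the lemma.

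Fix a realization of $x_1,\ldots,x_i$ with $\dim V_i = i$. Because $T(x,\ldots,x) = \langle T,\varphi_d(x)\rangle$, the event $\varphi_d(x_{i+1}) \in V_i$ is exactly the event that $T(x_{i+1},\ldots,x_{i+1}) = 0$ for every tensor $T$ orthogonal to $V_i$ in $\F_p^{n\times\cdots\times n}$. Since the diagonal value depends only on the symmetric part of $T$, and since $V_i \subseteq \sym^d(\F_p^n)$ ensures that anti-symmetric tensors (which have vanishing diagonal) automatically lie in $V_i^\perp$, this is equivalent to requiring the vanishing for every $T$ in $U_i := V_i^\perp \cap \sym^d(\F_p^n)$, which has dimension $m-i \geq m-s \geq C(\log_p m)^2 n^{d-1}$. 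Applying Theorem~\ref{thm:main} to $U_i$ with $t = C(\log_p m)^2$ and a parameter $r$ to be fixed yields a subspace $W \subseteq U_i$ with $\dim W \geq K := \lfloor t/(dr)\rfloor - 1$, every nonzero element of which has analytic rank at least $R := cr$, where $c=c_{\F_p,d}$ is the constant of Theorem~\ref{thm:main}. Expanding over a basis of $W$ yields the Fourier identity
\begin{equation*}
\Pr_x\!\left[\forall T\in W\colon T(x,\ldots,x) = 0\right] = \frac{1}{|W|}\sum_{T \in W}\Exp_x\chi(T(x,\ldots,x)) \leq p^{-K} + \max_{T\in W\setminus\{0\}}\bigl|\Exp_x\chi(T(x,\ldots,x))\bigr|.
\end{equation*}

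The key auxiliary estimate, and the place where the hypothesis $p \geq d+1$ enters, is the bound
\begin{equation*}
\bigl|\Exp_x\chi(T(x,\ldots,x))\bigr| \leq p^{-\arank(T)/2^d}
\end{equation*}
valid for every symmetric $T$. This comes from polarization: $d!$ is invertible in $\F_p$, so writing $P(x) := T(x,\ldots,x)$, the iterated discrete derivative satisfies $\Delta_{h_1}\cdots\Delta_{h_d}P(x) = d!\,T(h_1,\ldots,h_d)$, which is independent of $x$ since $\deg P = d$. Hence
\begin{equation*}
\|\chi\circ P\|_{U^d}^{2^d} = \Exp_{h_1,\ldots,h_d}\chi(d!\,T(h_1,\ldots,h_d)) = \bias(T),
\end{equation*}
using character-independence of the bias applied to the nontrivial character $\chi\circ(d!\cdot)$; monotonicity of the Gowers norms $|\Exp f| = \|f\|_{U^1} \leq \|f\|_{U^d}$ then gives the estimate.

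To conclude, choose $r = \lceil 2\cdot 2^d(\log_p m)/c\rceil$, so that $R/2^d \geq 2\log_p m$; then $K \geq t/(dr) - 2 \geq (cC/(4d\cdot 2^d))\log_p m$, which also exceeds $2\log_p m$ once $C$ is chosen large enough in terms of $d$ and $p$. Both terms in the probability bound are then at most $m^{-2}$, and summing the $s\leq m$ error probabilities gives the desired $1 - o(1)$ lower bound. The main obstacle is the polarization-based bound on the diagonal character sum; the remaining steps are Fourier-analytic bookkeeping combined with a careful choice of constants so that Theorem~\ref{thm:main} simultaneously yields a large subspace and high analytic rank.
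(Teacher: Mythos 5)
Your proof is correct and follows essentially the same route as the paper's: both reduce to bounding $\Pr_x[\varphi_d(x)\in U^\perp]$ for the large orthogonal complement $U$ inside $\symten_d^n(\F_p)$ (using non-degeneracy of the pairing there, which needs $p\geq d+1$), apply Theorem~\ref{thm:main} to $U$ to extract a subspace $W$ of high-analytic-rank tensors, and control the resulting character sum over $W$ via the relation between the diagonal exponential sum $\Exp_x\chi(T(x,\dots,x))$ and $\bias(T)$. The only deviations are cosmetic: the paper uses the Gowers--Wolf bound with exponent $1/2^{d-1}$ applied to the averaged bias and a product rather than a union bound over the $s$ steps, while you prove the slightly weaker exponent $1/2^d$ from scratch via polarization and Gowers-norm monotonicity and compensate in the choice of $r$ and $C$.
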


Theorem~\ref{thm:krandsz} now follows from Lemma~\ref{lem:main} with~$d = k-1$ and the Chevalley--Warning theorem~\cite[Chapter~6]{Lidl:1983}, which implies that the set from Lemma~\ref{lem:indep} has size~$\Omega_{k,p}(p^n)$.

Lemma~\ref{lem:main} follows from the following proposition, which in turn follows from Theorem~\ref{thm:main}.
A $d$-tensor is symmetric if it is invariant under permutations of its legs.
Let $\symten_d^n(\F_p)$ be the ${n + d-1\choose d}$-dimensional subspace of symmetric $d$-tensors.
Note that if $p>d$, then~$\langle\cdot,\cdot\rangle$ is non-degenerate on~$\symten_d^n(\F_p)$ since if $T$ is an element of this space with a nonzero $(i_1,\dots,i_d)$-coordinate, then by symmetry of~$T$ and the fact that $d!\not\equiv 0\pmod{p}$, we have
\beqn
\Big\langle T, \sum_{\pi\in S_d} e_{i_{\pi(1)}}\otimes\cdots\otimes e_{i_{\pi(d)}}\Big\rangle
=
\sum_{\pi\in S_d}\langle T, e_{i_{\pi(1)}}\otimes\cdots\otimes e_{i_{\pi(d)}}\rangle=
d!T_{i_1,\dots,i_d}
\ne 0.
\eeqn

\begin{proposition}\label{prop:inU}
For every integer~$d\geq 2$ and prime $p\geq d+1$, there is a~$C\in (0,\infty)$ such that the following holds.
Let~$t> 0$
and $U\subseteq \symten_d^n(\F_p)$ be a subspace of co-dimension at least $C 4^dt^2n^{d-1}$.
Then,
\beqn
\Pr_{x\in \F_p^n}[\varphi_{d}(x)\in U] \leq \frac{2}{p^{2t}}.
\eeqn
\end{proposition}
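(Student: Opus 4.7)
The plan is to reduce $\Pr_x[\varphi_d(x)\in U]$ to a character sum over a well-chosen subspace of the orthogonal complement of $U$, and then invoke Theorem~\ref{thm:main} to guarantee that every nonzero element of that subspace has large analytic rank.

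First, since $\langle\cdot,\cdot\rangle$ is non-degenerate on $\symten_d^n(\F_p)$ for $p\geq d+1$ (as noted above), the orthogonal complement $U^\perp$ of $U$ within $\symten_d^n(\F_p)$ has dimension at least $C4^dt^2n^{d-1}$. Because $\varphi_d(x)\in\symten_d^n(\F_p)$, the event $\varphi_d(x)\in U$ is equivalent to $T(x,\dots,x)=\langle T,\varphi_d(x)\rangle=0$ for every $T\in U^\perp$, and a fortiori for every $T$ in any subspace $W\subseteq U^\perp$. I would apply Theorem~\ref{thm:main} to $V=U^\perp$ with target rank parameter $r=\lceil 2^{d+1}t/c\rceil$, where $c=c_{\F_p,d}\in(0,1]$ is the constant from that theorem. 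Taking $C$ sufficiently large in terms of $d$ and $c$, this produces a subspace $W\subseteq U^\perp$ with $\dim W\geq 2t$ such that $\arank(T)\geq cr\geq 2^{d+1}t$ for every nonzero $T\in W$. The constraint $n\geq t'=C4^dt^2$ of Theorem~\ref{thm:main} holds automatically whenever the hypothesis is non-vacuous, since $\dim U^\perp\leq n^d$.

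Next, Fourier inversion on $W\cong\F_p^{\dim W}$ yields
\[
\Pr_x\!\big[\forall T\in W:\,T(x,\dots,x)=0\big]=\frac{1}{p^{\dim W}}\sum_{T\in W}\Exp_x\chi\big(T(x,\dots,x)\big).
\]
The $T=0$ term contributes $p^{-\dim W}\leq p^{-2t}$. For each nonzero $T\in W$, the key step is the bound
\[
\big|\Exp_x\chi\big(T(x,\dots,x)\big)\big|\leq p^{-\arank(T)/2^d},
\]
obtained by iterating the Cauchy--Schwarz inequality $d$ times: the $d$-fold discrete derivative of the homogeneous degree-$d$ polynomial $x\mapsto T(x,\dots,x)$ in directions $h_1,\dots,h_d$ equals the constant $d!\,T(h_1,\dots,h_d)$ by the polarization identity, and since $p\geq d+1$ the factor $d!$ is invertible in $\F_p$, so the resulting averaged character sum equals $\bias(T)=p^{-\arank(T)}$. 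Summing over the at most $p^{\dim W}$ nonzero tensors in $W$, each contributing at most $p^{-cr/2^d}\leq p^{-2t}$, yields a total contribution of at most $p^{-2t}$, and the whole probability is bounded by $2p^{-2t}$, as required.

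The main obstacle is the bias-versus-analytic-rank inequality for the diagonal restriction of a symmetric tensor; this is the only step beyond elementary linear algebra and the Fourier inversion identity, and it is precisely where the hypothesis $p\geq d+1$ (used to invert $d!$ and apply polarization) enters essentially. Everything else is routine packaging of Theorem~\ref{thm:main}.
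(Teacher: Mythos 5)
Your proof is correct and follows essentially the same route as the paper: pass to a subspace $W\subseteq U^\perp$ via Theorem~\ref{thm:main}, Fourier-expand the indicator of $W^\perp$, and control the diagonal character sums $\Exp_x\chi(T(x,\dots,x))$ by the multilinear bias $p^{-\arank(T)}$ using iterated Cauchy--Schwarz together with the invertibility of $d!$ for $p\geq d+1$. The only differences are bookkeeping: the paper cites Gowers--Wolf for this last step with exponent $2^{d-1}$ rather than your $2^d$ and bounds the averaged bias via Jensen's inequality instead of termwise, all of which is absorbed into the constant $C$.
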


\begin{proof}
Let $V = U^\perp\subseteq \symten_d^n(\F_p)$.
Then, since $\langle\cdot,\cdot\rangle$ is non-degenerate, $U = V^\perp$ and $\dim(V) \geq C4^dt^2n^{d-1}$.
Moreover, if~$C$ is large enough in terms of~$d$ and~$p$, then it follows from Theorem~\ref{thm:main} that there is a subspace~$W\subseteq V$ of dimension~$m\geq 2^dt$ such that each nonzero element of~$W$ has analytic rank at least~$r\geq 2^dt$.
Hence, for $\omega = e^{2\pi i/p}$, we have
\begin{align*}
\Pr_{x\in \F_p^n}[\varphi_{d}(x)\in U]
&=
\Pr_{x\in \F_p^n}[\varphi_{d}(x)\in V^\perp]\nonumber\\
&\leq
\Pr_{x\in \F_p^n}[\varphi_{d}(x)\in W^\perp]\nonumber\\
&=
\Exp_{x\in \F_p^n}\Exp_{T\in W}\omega^{\langle \varphi(x), T\rangle}\nonumber\\
&\leq
\Big(
\Exp_{T\in W}\bias(T)
\Big)^{\frac{1}{2^{d-1}}}\label{eq:bias}\\
&\leq
\Big(
\frac{1}{p^m} + \frac{p^m - 1}{p^m}\,\frac{1}{p^r}
\Big)^{\frac{1}{2^{d-1}}}\nonumber\\
&\leq
\frac{2}{p^{2t}},\nonumber
\end{align*}
where the third line follows from~\cite[Lemma~3.5]{Altman:2019} and the fourth line follows from~\cite[Lemma~3.2]{GowersWolf:2011} and Jensen's inequality.
\end{proof}

A similar inequality to the one stated in Proposition~\ref{prop:inU} was proved in~\cite{Bhrushundi:2018} over~$\F_2^n$.
There, the full space of tensors is considered and the random element is of the form $x_1\otimes\cdots\otimes x_d$, where the $x_i$ are independent and uniformly distributed.
Similar to~\cite[Lemma~3.4]{Altman:2019} we can now prove Lemma~\ref{lem:main} .

\begin{proof}[ of Lemma~\ref{lem:main}]
The probability that $\varphi_d(x_1),\dots,\varphi_d(x_s)$ are linearly independent is at least
\begin{multline*}
\Pr[x_1\ne 0]\prod_{i=2}^s\Pr\big[\varphi_d(x_i) \not\in \vspan\big(\varphi_d(x_1),\dots,\varphi_d(x_{i-1})\big)\big]\\
\geq
\Big(1 - \max_{U\subseteq \symten_d^n(\F_p)}\Pr_{x\in \F_p^n}[\varphi_d(x)\in U]\Big)^s,
\end{multline*}
where the maximum is taken over $(s-1)$-dimensional subspaces.
Setting $t = \log_p m$ and using that~$s \leq m$, Proposition~\ref{prop:inU} then shows that this bounded from below by $(1 - \frac{2}{m^2})^s \geq 1 - O(1/m) \geq 1 - o(1)$.
\end{proof}

\bibliographystyle{alphaabbrv}
\bibliography{random_szemeredi}

\end{document}